\DeclareSymbolFont{cyrletters}{OT2}{wncyr}{m}{n}
\DeclareMathSymbol{\Sha}{\mathalpha}{cyrletters}{"58}
\font\teneufm=eufm10 \font\seveneufm=eufm7
\font\fiveeufm=eufm5
\let\goth\mathfrak
\def\cO{\mathcal O}
\def\gm{\goth m}
\def\gG{\goth G}
\def\gH{\goth H}
\def\1{\mbox{\bf 1}}
\def\Ouf{\mathrm{Of}}
\DeclareMathOperator{\Aut}{Aut}
\DeclareMathOperator{\Out}{Out} 
\DeclareMathOperator{\Isom}{Isom}
\DeclareMathOperator{\SO}{\rm SO}
\newcommand{\incl}[1][r]
{\ar@<-0.2pc>@{^(-}[#1] \ar@<+0.2pc>@{-}[#1]}
\newtheorem{stheorem}{Theorem}[section]%Theorems et al in italic font.
\newtheorem{slemma}[stheorem]{Lemma}
\newtheorem{sexample}[stheorem]{Example}
\theoremstyle{definition}%Theorems et al in roman font.
\numberwithin{equation}{section}
\def\ZZ{\mathbb{Z}}
\def\gE{\mathfrak{E}}
\def\gF{\mathfrak{F}}
\def\gG{\mathfrak{G}}
\def\cO{\mathcal{O}}
\def\2int{\mathop{2\int}\nolimits}
\def\Spec{\mathop{\rm Spec}\nolimits}
\def\Gal{\mathop{\rm Gal}\nolimits}
\def\Pic{\mathop{\rm Pic}\nolimits}
\def\Aut{\text{\rm{Aut}}}
\def\Out{\text{\rm{Out}}}
\def\sm{\smallskip}
\def\Isom{\mathop{\rm Isom}\nolimits}
\def\resp.{\mathop{\rm resp.}\nolimits}
\def\lgr{\longrightarrow}
\font\math=cmmi10
\def\varpi{\hbox{\math\char'44}}
\def\simlgr{\buildrel\sim\over\lgr}
\def\pa{\S\kern.15em }
\def\un{\uppercase\expandafter{\romannumeral 1}}
\def\deux{\uppercase\expandafter{\romannumeral 2}}
\def\trois{\uppercase\expandafter{\romannumeral 3}}
\def\quatre{\uppercase\expandafter{\romannumeral 4}}
\def\cinq{\uppercase\expandafter{\romannumeral 5}}
\def\six{\uppercase\expandafter{\romannumeral 6}}
\def\hfl#1#2#3{\smash{\mathop{\hbox to#3{\rightarrowfill}}\limits
^{\scriptstyle#1}_{\scriptstyle#2}}}
\def\gfl#1#2#3{\smash{\mathop{\hbox to#3{\leftarrowfill}}\limits
^{\scriptstyle#1}_{\scriptstyle#2}}}
\title[Loop group schemes]{Loop group schemes 
and Abhyankar's lemma \\ 
{\quad} \\
{\tiny \it Sch\'emas en groupes de lacets et 
lemme d'Abhyankar}}
\author[P. Gille]{Philippe Gille}\address{UMR 5208
Institut Camille Jordan - Universit\'e Claude Bernard Lyon 1
43 boulevard du 11 novembre 1918
69622 Villeurbanne cedex - France 
}
\email{gille@math.univ-lyon1.fr}
\date{\today}
\begin{document}

 \begin{abstract}  We define the notion of loop reductive group schemes defined over the localization of a regular henselian ring $A$ at a
 strict normal crossing divisor $D$. We provide a criterion for the existence of  parabolic subgroups of a given type.

\medskip

\noindent{\it R\'esum\'e.} On d\'efinit la  notion de
sch\'emas en groupes r\'eductifs de lacets au-dessus du localis\'e d'un anneau hens\'elien  $A$ en un 
diviseur \`a croisements normaux stricts $D$. 
On \'etablit un crit\`ere  pour qu'un tel sch\'ema en groupes
admette  un sous--sch\'ema en groupes paraboliques d'un type donn\'e.

\smallskip

\noindent {\em Keywords:}  Reductive group schemes, normal crossing divisor,
parabolic subgroups. \\

\noindent {\em MSC 2000:} 14L15, 20G15, 20G35.

\end{abstract}

\maketitle

% {\small \tableofcontents }

\bigskip

\noindent{\bf Version fran\c caise abr\'eg\'ee}

\smallskip

Soit $A$ un anneau local hens\'elien r\'egulier muni d'un syst\`eme de param\`etres $f_1,\dots, f_r$. 
On note $k$ le corps r\'esiduel de $A$ et $D$ le diviseur $D=\mathrm{div}(f_1)+  
\dots +\mathrm{div}(f_r)$, il est \`a croisements normaux stricts. On pose $X=\Spec(A)$ et $U=
X \setminus D=\Spec(A_D)$. La th\'eorie d'Abhyankar d\'ecrit 
les rev\^etements finis \'etales connexes 
de $U=X \setminus D=\Spec(A_D)$ qui 
sont mod\'er\'ement ramifi\'es le long de $D$
 \cite[XIII.2]{SGA1}.
Un tel objet est domin\'e par un rev\^etement galoisien
de la forme 
$$
B_n=B[T_1^{\pm 1},\dots, T_r^{\pm 1}]/ ( T_1^{n} - f_1, \dots, T_r^{n}- f_r)
$$
o\`u $n$ d\'esigne un entier $\geq 1$ premier
\`a la caract\'eristique de $k$ et $B$ est une  $A$--alg\`ebre galoisienne contenant  une racine  primitive $n$--i\`eme de
  l'unit\'e. Le groupe de Galois $\Gal(B_n/A_D)$
est le produit semi-direct $\mu_n(B)^r \rtimes \Gal(B/A)$
o\`u $\mu_n(B)^r$ agit par multiplication sur 
les $T_1, \dots, T_r$.
Si $G$ est un $A$-sch\'ema en groupes localement de pr\'esentation finie,  un $1$-cocycle 
$z: \Gal(B_n/A_D) \to G(B_n)$
est dit de {\it lacets}
 (loop en anglais) s'il est \`a valeurs dans 
$G(B) \subset G(B_n)$. Cette terminologie est 
inspir\'ee par l'analogie avec 
le cas des polyn\^omes de Laurent
\cite[ch. 3]{GP}.

On note $\widehat X$ l'\'eclat\'e de $X=\Spec(A)$ 
en son point ferm\'e, c'est  un sch\'ema r\'egulier
 \cite[\S 8.1,  th. 1.19]{Liu} et  le diviseur exceptionnel $E \subset \widehat X$ est
 un diviseur de  Cartier  isomorphe \`a $\mathbb{P}^{r-1}_k$.
On note alors   $R=\mathcal{O}_{\widehat X,\eta}$ l'anneau local au point g\'en\'erique  $\eta$ de $E$.  
Cet anneau  $R$ est  de valuation discr\`ete, de
corps des fractions  $K$, et de corps r\'esiduel  
$F=k(E)=k(t_1, \dots, t_{r-1})$ o\`u $t_i$ d\'esigne l'image
de $\frac{f_i}{f_r} \in R$ par l'application de sp\'ecialisation $R \to F$.
On note alors  $v: K^\times \to \ZZ$
la valuation discr\`ete associ\'ee \`a $R$ et $K_v$
le compl\'et\'e de $K$. Le r\'esultat principal de cette note est le suivant.

\smallskip

\noindent{\bf Th\'eor\`eme} (extrait du  th. \ref{thm_fixed}).
{\it On suppose que $G$ agit  sur un $A$--sch\'ema propre et lisse $Z$. 
Soit   $\phi$ un $1$--cocycle de lacets pour $G$.
On note ${_\phi \! Z}/U$ le tordu par $\phi$ de $Z\times_X U$
  Alors les assertions suivantes sont \'equivalentes:

\sm

(a)  $({_\phi \!Z)}(U) \not \not = \emptyset$;

\sm 

(b) $({_\phi \!Z})(K_v) \not = \emptyset$.
}

\medskip

C'est assez proche d'un r\'esultat sur les polyn\^omes de Laurent \cite[\S , thm. 7.1]{GP}. L'application principale concerne le cas d'un sch\'ema en groupes r\'eductifs
de lacets. 
Par d\'efinition, un $U$-sch\'ema en groupes r\'eductifs 
$G$ est {\it de lacets} si il est isomorphe \`a un tordu
de sa forme d\'eploy\'ee $G_0$ par un  $1$--cocycle de lacets \`a valeurs dans le sch\'ema en groupes
des automorphismes $\Aut(G_0)$.  
On applique alors le r\'esultat ci-dessus 
 \`a des $A$--sch\'emas de  sous-groupes paraboliques
de $G_0$ d'un type donn\'e (th. \ref{thm_parabolic}). On en d\'eduit par exemple que  si $G$ est un 
$U$-sch\'ema en groupes r\'eductifs 
 {\it de lacets}, alors 
 $G$ admet 
un  $U$--sch\'ema en groupes de Borel si et seulement si le $K_v$--sch\'ema en groupes $G_{K_v}$ 
est quasi-d\'eploy\'e.
Plus g\'en\'eralement l'isotropie de $G$ est  contr\^ol\'ee par l'indice de Tits de $G_{K_v}$.

\section{Introduction}\label{section_intro}
In the reference \cite{GP}, we investigated a theory of loop reductive group 
schemes over the ring of Laurent polynomials $k[t_1^{\pm 1}, \dots ,
t_n^{\pm 1}]$. Using Bruhat-Tits' theory, this permitted to relate
what the 
study of those group schemes to that of reductive algebraic groups
over the field of iterated Laurent series $k((t_1)) \dots ((t_n))$.
The main issue of this note is to start a similar approach
for reductive group schemes defined over the  localization  $A_D$ of 
a regular henselian ring $A$ at a  strict normal crossing divisor $D$
and to relate with  algebraic groups defined over a natural field
associated to $A$ and $D$, namely the completion $K_v$ of
the fraction field $K$ with respect to the valuation arising
from the blow-up of $\Spec(A)$ at its maximal ideal.
The example which connects the two viewpoints is 
$k[[t_1,\dots, t_n]][\frac{1}{t_1}, \dots \frac{1}{t_n}]$
where 
$K_v \cong k\bigl( \frac{t_1}{t_n} , \dots, \frac{t_1}{t_{n-1}})((t_n))$.

After defining the notion of loop reductive group schemes in this setting,
we show that for this class of group schemes, the existence
of parabolic subgroups over the localization $A_D$ is controlled by the
 parabolic subgroups over $K_v$ (Theorem \ref{thm_parabolic}).

\medskip

\noindent{\bf Acknowledgements.}  We thank R. Parimala for
sharing her insight about the presented results.
Finally we thank the referee for a careful reading of the manuscript. 
 
\bigskip  

\section{Tame fundamental group}

\subsection{Abhyankar's lemma} Let $X=\Spec(A)$ be a regular local scheme (not assumed henselian at this stage).
Let $k$ be the residue field of $A$
and $p \geq 0$ be its characteristic.
We put $\widehat \ZZ'= \prod_{l \not =p} \ZZ_l$.
Let $K$ be the fraction field of $A$, 
and let $K_s$ be a separable closure of $K$.
It determines a base point $\xi: \Spec(K) \to X$
so that we can deal with the Grothendieck fundamental 
group $\Pi_1(X, \xi)$ \cite{SGA1}. 

Let $(f_1, \dots, f_r)$ be a regular sequence of $A$
and consider the divisor $D= \sum D_i= \sum \mathrm{div}(f_i)$, it  has strict normal crossings.
We put $U = X \setminus  D= \Spec(A_D)$.

We recall that a  finite \'etale cover $V \to U$ is 
{\it tamely ramified} with respect to $D$ if the associated \'etale $K$--algebra
$L=L_1 \times \dots \times L_a$ is 
tamely ramified at the $D_i's$, that is, for each $i$, there exists $j_i$ such that 
for the  Galois closure $\widetilde L_{j_i}/K$ of $L_{j_i}/K$,
the inertia group associated to $v_{D_i}$ has order prime to 
$p$ \cite[XIII.2.0]{SGA1}.

Grothendieck and Murre defined the  
tame ({\it mod\'er\'e} in French) fundamental group 
$\Pi_1^D(U, \xi)$
with respect to  $U \subset X$ as defined in \cite[XIII.2.1.3]{SGA1} and \cite[\S 2]{GM}. This is a profinite quotient of $\Pi_1(U, \xi)$ whose quotients by open subgroups
 provides finite Galois tame  cover of $U$.
 
 \sm
 
Let $V \to U$ be a finite \'etale tame cover. 
In this case Abhyankar's lemma states that  
there exists a flat Kummer cover $X'=\Spec(A') \to X$
where $$A'= A[T_1,\dots, T_r]/ ( T_1^{n_1} - f_1, \dots, T_r^{n_r}- f_r)$$
and the $n_i$'s are coprime to $p$
such that $V'= V \times_X X' \to X'$ extends uniquely
to a finite \'etale cover $Y' \to X'$ \cite[XIII.5.2]{SGA1}.

\begin{slemma}\label{lem_picard} Let $V \to U$ be a finite \'etale cover 
which is tame. Then $\Pic(V)=0$.
\end{slemma}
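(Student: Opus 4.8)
The plan is to use Abhyankar's lemma to transport the question to a finite \'etale cover of a \emph{regular semilocal} base, where Picard groups are forced to vanish, and then to descend along the Kummer cover. We may assume $V$ connected. The whole difficulty will be concentrated in a single cohomological vanishing, which is where tameness is genuinely used.

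First I would record two regularity facts. Since $A$ is regular local and $D=\sum\mathrm{div}(f_i)$ has strict normal crossings, the ring $A'=A[T_1,\dots,T_r]/(T_1^{n_1}-f_1,\dots,T_r^{n_r}-f_r)$ furnished by Abhyankar's lemma is again regular; being finite over the local ring $A$, it is semilocal. The cover $Y'=\Spec(B')\to X'=\Spec(A')$ is finite \'etale, so $B'$ is regular and semilocal as well. Over a semilocal ring every finitely generated projective module of constant rank is free, whence $\Pic(B')=0$; as $B'$ is regular this gives $\mathrm{Cl}(B')=\Pic(B')=0$, so each connected component of $Y'$ is a unique factorization domain. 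Writing $V'=V\times_X X'$ as the open subscheme $\Spec(B'[1/f])$ of $Y'$, with $f=f_1\cdots f_r$, its class group is a quotient of $\mathrm{Cl}(B')=0$, so $\Pic(V')=\mathrm{Cl}(V')=0$. On the other hand $U=\Spec(A_D)$ is a localization of the regular ring $A$, hence regular, and $V\to U$ is finite \'etale, so $V$ is regular and $\Pic(V)=\mathrm{Cl}(V)$.

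Next I would set up descent. Possibly after adjoining the relevant roots of unity (of order prime to $p$), or else by fppf descent along the diagonalizable group scheme, the Kummer cover is a torsor under $\Gamma=\mu_{n_1}\times\cdots\times\mu_{n_r}$, and by the uniqueness clause in Abhyankar's lemma $\Gamma$ acts on $Y'$ with $V'\to V$ a finite \'etale $\Gamma$-torsor. The Hochschild--Serre sequence for $\mathbb{G}_m$ then reads
\[
0\to H^1(\Gamma,\mathcal{O}(V')^\times)\to \Pic(V)\to \Pic(V')^{\Gamma},
\]
so $\Pic(V')=0$ identifies $\Pic(V)$ with $H^1(\Gamma,\mathcal{O}(V')^\times)$. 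To compute the latter I would use the $\Gamma$-equivariant unit--divisor sequence of the UFD $B'$,
\[
0\to B'^\times\to \mathcal{O}(V')^\times\xrightarrow{\ \mathrm{div}\ }\textstyle\bigoplus_i \mathbb{Z}\,[T_i]\to 0,
\]
in which $\Gamma$ fixes each boundary class $[T_i]$. Since $H^1(\Gamma,\mathbb{Z})=\Hom(\Gamma,\mathbb{Z})=0$, the group $H^1(\Gamma,\mathcal{O}(V')^\times)$ is the cokernel of the connecting map $\bigoplus_i\mathbb{Z}[T_i]\to H^1(\Gamma,B'^\times)$, whose value on $[T_i]$ is the class of the tame character $\zeta_i\colon\Gamma\twoheadrightarrow\mu_{n_i}\hookrightarrow B'^\times$ arising from $\sigma(T_i)=\zeta_i(\sigma)T_i$. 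Everything thus reduces to showing that $H^1(\Gamma,B'^\times)$ is generated by the classes $[\zeta_1],\dots,[\zeta_r]$. Here tameness enters: as $|\Gamma|$ is prime to $p$, the pro-$p$ group $1+\mathrm{rad}(B')$ of principal units carries no higher $\Gamma$-cohomology, so $H^1(\Gamma,B'^\times)=H^1(\Gamma,(B'/\mathrm{rad})^\times)$ is computed on the residue fields, where the inertia of $Y'\to Y'/\Gamma$ along the components of $D$ is cyclic with character exactly $\zeta_i$.

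The main obstacle is precisely this last matching of $H^1(\Gamma,B'^\times)$ with the span of the ramification characters $[\zeta_i]$. I expect the real subtlety to lie in the residue-field contribution $H^1(\Gamma,(B'/\mathrm{rad})^\times)$ at the closed point, where $\Gamma$ can act through its full inertia; pinning this down requires the local analysis of tame ramification along the strict normal crossing divisor together with the prime-to-$p$ hypothesis, and is exactly the point at which the tameness of $V\to U$ is indispensable. Once that identification is in place, the cokernel above is zero, so $H^1(\Gamma,\mathcal{O}(V')^\times)=0$ and hence $\Pic(V)=0$.
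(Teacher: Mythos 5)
Your reductions up to the descent step are sound: $A'$ is regular and semilocal, $Y'\to X'$ is finite \'etale so $B'$ is regular and semilocal, hence $\Pic(Y')=\mathrm{Cl}(Y')=0$ and therefore $\Pic(V')=\mathrm{Cl}(V')=0$; and the Hochschild--Serre sequence for the Kummer torsor $V'\to V$ then identifies $\Pic(V)$ with $H^1(\Gamma,\mathcal{O}(V')^\times)$. But at that point you have not proved the lemma, you have restated it: given $\Pic(V')=0$, the vanishing of $H^1(\Gamma,\mathcal{O}(V')^\times)$ \emph{is} the assertion $\Pic(V)=0$, and this is exactly the step you leave open (``I expect the real subtlety to lie\dots'', ``once that identification is in place''). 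Concretely, you never show that $H^1(\Gamma,B'^\times)$ is spanned by the tame characters $[\zeta_i]$, i.e.\ that the connecting map $\mathrm{Div}^\Gamma\to H^1(\Gamma,B'^\times)$ is onto. This is a genuine gap, and it sits at the only nontrivial point of your argument. Moreover, the route you sketch for closing it fails at the stated level of generality: this lemma does \emph{not} assume $A$ henselian (the paper introduces that hypothesis only afterwards), so $1+\mathrm{rad}(B')$ need not be pro-$p$ nor uniquely divisible prime to $p$. For $A=\mathbb{C}[x,y]_{(x,y)}$ the unit $1+x$ has no square root in $A$, so raising to a prime-to-$p$ power is not surjective on principal units, and your reduction of $H^1(\Gamma,B'^\times)$ to residue fields collapses. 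There are also smaller inaccuracies: the boundary divisors of $V'\subset Y'$ are the primes of $B'$ lying over the $T_i$, of which there may be several for each $i$ (e.g.\ when $Y'$ is disconnected), and $\Gamma$ permutes them rather than fixing them; one still gets $H^1(\Gamma,\mathrm{Div})=0$, but by Shapiro's lemma for permutation modules, not by your ``$\Gamma$ fixes each class, $\Hom(\Gamma,\mathbb{Z})=0$'' argument.

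The contrast with the paper is instructive: the paper's proof never touches unit-group cohomology. It uses only that $X'$ is regular, hence locally factorial, that $A'$ (being finite over the local ring $A$) is semilocal so $\Pic(X')=0$, and that restriction maps on $\Pic$ out of locally factorial schemes are surjective (EGA IV, 21.6.11), concluding $\Pic(V)=0$ by pushing the vanishing along the chain $\Pic(X')\to\Pic(V')\to\Pic(V)$. In other words, the descent from $V'$ back to $V$ is absorbed into surjectivity statements for $\Pic$, whereas your torsor-cohomology formulation converts it into the computation of $H^1(\Gamma,\mathcal{O}(V')^\times)$ --- precisely the difficulty you were unable to resolve.
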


\begin{proof} We use the same notation as above.
We know that  $X'$ is regular \cite[XIII.5.1]{SGA1}
so a fortiori locally factorial. It follows 
that  the restriction  maps $\Pic(X') \to \Pic(V') \to \Pic(V)$
are surjective \cite[21.6.11]{EGA4}. 
Since $A'$ is finite over the local ring $A$, it is semilocal so that $\Pic(A')=\Pic(X')=0$. Thus $\Pic(V)=0$ as desired.
\end{proof}

\smallskip

From now on we assume that $A$ is henselian.
According to \cite[18.5.10]{EGA4},  the finite 
$A$--ring $A'$ is a finite product of henselian local rings.
We observe  that $A' \otimes_A k= k[T_1,\dots, T_r]/ (T_1^{n_1}, \dots , T_r^{n_r})$ is a local Artinian algebra so that $A'$ is connected. 
It follows that $A'$ is a henselian local ring.
Its maximal ideal is $\gm'= \gm \otimes_A A' + \langle T_1, \dots, T_r \rangle$
so that $A' / \gm'= k$. Since there is an equivalence of categories
between finite \'etale covers of $A$ (resp.\, $A'$) and \'etale $k$--algebras \cite[18.5.15]{EGA4}, the base change from $A$ to $A'$ provides  an equivalence of categories
between the category of finite \'etale covers of $A$  and that of $A'$.

It follows that  $Y' \to X'$
descends uniquely to a  finite \'etale cover
$\widetilde f: \widetilde Y \to X$. 
From now on, we assume that 
 $V$ is furthermore connected, it implies that 
$$
H^0(V, \cO_V)=B[T_1,\dots, T_r]/ ( T_1^{n} - f_1, \dots, T_r^{n}- f_r)
$$
where $B$ is a finite connected \'etale cover of $A$. 
It follows that  $V \to U$ is  a quotient of a Galois cover of the shape
$$
B_n=B[T_1^{\pm 1},\dots, T_r^{\pm 1}]/ ( T_1^{n} - f_1, \dots, T_r^{n}- f_r)
$$
where $B$ is Galois cover of $A$ containing a  primitive $n$--th root of unity. 
We notice that $B_n$ is the  localization at $T_1\dots T_r$ of
$B_n'=B[T_1,\dots, T_r]/ ( T_1^{n} - f_1, \dots, T_r^{n}- f_r)$.
We have $$
\Gal(B_n/A_D)= \bigl( \prod_{i=1}^r \mu_n(B) \bigr) \rtimes \Gal(B/A).
$$
Passing to the limit we obtain an isomorphism
$$
\pi_1^t(U, \xi) \cong \bigl( \prod_{i=1}^r \widehat \ZZ'(1) \bigr)
\rtimes \pi_1(X, \xi).
$$
We denote by  $f: U^{sc, t} \to U$ the profinite \'etale cover
associated to the quotient $\pi_1^t(U, \xi)$ of $\pi_1(U,\xi)$.
According to \cite[thm. 2.4.2]{GM}, it is the universal tamely ramified cover of $U$. 
It is a localization of the inductive limit $\widetilde B'$ of the  $B_n'$. On the other hand we consider  the inductive limit $\widetilde B$ of the $B$'s and observe that $\widetilde B'$
is a  $\widetilde B$-ring.

\subsection{Blow-up}\label{subsec_blow_up}

We follow a blowing-up construction arising from  
 \cite[lemma 15.1.1.6]{EGA4}.
We denote by $\widehat X$ the blow-up of $X=\Spec(A)$ 
at his closed point, this is a regular scheme \cite[\S 8.1, 
th. 1.19]{Liu} and the exceptional divisor $E \subset \widehat X$ is 
a Cartier divisor isomorphic to $\mathbb{P}^{r-1}_k$.
We denote by  $R=\mathcal{O}_{\widehat X,\eta}$ the local ring at the generic point $\eta$ of $E$.  The ring $R$ is a DVR of
fraction field $K$ and of  residue field 
$F=k(E)=k(t_1, \dots, t_{r-1})$ where $t_i$ is the image
of $\frac{f_i}{f_r} \in R$ by the specialization map.
We denote by $v: K^\times \to \ZZ$
the discrete valuation associated  to $R$.

We deal now with a Galois extension $B_n$ of $A_D$ as above.
Since $B$ is a connected finite \'etale  cover of $A$,
$B$ is regular and  local; it is furthermore henselian \cite[18.5.10]{EGA4}.
We denote by $L$ the fraction field of $B$ and by $L_n$
that of $B_n$. We have $[L_n:L]= n^r$.
We want to extend the valuation $v$ to $L$ and to $L_n$.

We denote by $l=B/\gm_B$ the residue field of $B$, this is a finite Galois field  extension of $k$. Also $(t_1,\dots, t_r)$ is a system of parameters for
$B$. We denote by $w:L^\times \to \ZZ$ 
 the discrete valuation associated  to the
exceptional divisor of the blow-up of $\Spec(B)$ at its closed point. Then $w$ extends $v$ and $L_w/K_v$ is an unramified
extension of degree $[L:K]$ and of residual extension 
$F_l= l(t_1, \dots , t_{r-1})/k(t_1, \dots , t_{r-1})$.

On the other hand we denote by $w_n:L_n^\times \to \ZZ$ 
 the discrete valuation associated  to the
exceptional divisor of the blow-up of $\Spec(B_n)$ at its closed point. We put $l_n= B_n/ \gm_{B_n}$, 
we have $l=l_n$.
The valuation  $\frac{w_n}{n}$ on $L_n$ extends $w$ and 
its residual extension is 
$F_{l,n}= l\Bigl(  t_1^{1/n},\dots, t_{r-1}^{1/n} \Bigr) /k\bigl(t_1, \dots, t_{r-1} \bigr)$ 
so that $[F_{l,n}:F_l]=n^{r-1}$.
Furthermore the ramification index $e_n$ of $L_n/L$ is $\geq n$.
Since $n^r \leq e_n \, {[F_{l,n}:F_l]} \leq [L_n:K]=n^r$(where the last inequality is \cite[\S VI.3, prop. 2]{BAC}) it follows that $e_n=n$. 
The same statement shows that the map 
$L_w \otimes_L L_n \to L_{w_n}$ is an isomorphism.
To summarize $L_{w_n}/L_w$ is tamely ramified of ramification index $n$ and of degree $n^r$.
Altogether we have $L_{w_n}=L_w \otimes_K L_n$ so 
that $L_{w_n}$ is Galois over 
$K_v$ of group ${\prod_i \mu_n(B) \rtimes \Gal(B/A)}
= {\prod_i \mu_n(l) \rtimes \Gal(l/k)}$.

We denote by $\Delta$  the diagonal
embedding $\mu_n(l) \subset \prod_i \mu_n(l)$. 
We put $L_{w_n}^{\Delta}= L_n^{\Delta(\mu_n(B))}$.
Since $t_r$ is an uniformizing parameter of $K_v$ and 
since $\Delta(\zeta) \, . \, t_r= \zeta. t_r$ for
each  $\zeta \in \mu_n(B)$, it follows that 
$(L_{w_n})^{\Delta}$ is the maximal unramified extension of
$L_{w_n}/K_v$.

\subsection{Loop cocycles and loop torsors}

Let $G$ be an $X$--group scheme locally of finite presentation.
A loop cocycle is an element of $Z^1\bigl(\pi_1^t(U), G(\widetilde B) \bigr)$
and it defines a Galois cocycle in $Z^1(\pi_1^t(U), G(U^{sc, t}))$.
We denote by $Z^1_{loop}(\pi_1^t(U), G(U^{sc, t}))$ the image of 
the map  $Z^1\bigl(\pi_1^t(U), G(\widetilde B) \bigr) \to Z^1(\pi_1^t(U), G(U^{sc, t}))$ and by $H^1_{loop}(U, G)$ the image of 
the map  $$
Z^1\bigl(\pi_1^t(U), G(\widetilde B) \bigr) \to 
H^1(\pi_1^t(U), G(U^{sc, t})) \to H^1(U, G).
$$
We say that a $G$-torsor $E$ over $U$ (resp.\, an  fppf sheaf $G$-torsor)
is a loop torsor if its class belongs to
$H^1_{loop}(U, G) \subset H^1(U, G)$.

A given class $\gamma \in H^1_{loop}(U, G)$ is represented
by a $1$--cocycle $\phi: \Gal(B_n/A_D) \to G(B)$ for some cover $B_n/A$ as above.
Its restriction $\phi^{ar}: \Gal(B/A) \to G(B)$ to the subgroup $\Gal(B/A)$ of
$\Gal(B_n/A_D)$ is called the ``arithmetic part'' and the other restriction
$\phi^{geo}: \prod_i \mu_n(B) \to \gG(B)$ is called the geometric part.
We observe that $\phi^{geo}$ is  a $B$-group homomorphism.

Furthermore for $\sigma \in \Gal(B/A)$ and $\tau \in 
\prod_i \mu_n(B)$ the computation of \cite[page 16]{GP}
shows that $\phi^{geo}(\sigma \tau \sigma^{-1})=
\phi^{ar}(\sigma) \, {^\sigma \!  \phi}(\tau)  \, 
\phi^{ar}(\sigma)^{-1}$ so that $\phi^{geo}$
 descends to a homomorphism of $A$-group schemes 
$\phi^{geo}: \mu_n^r \to {_{\phi^{ar}}\!G}$.  
This provides a parameterization of loop cocycles.

\begin{slemma} \label{lem_dico}(1) For $B_n/A_D$ as above, the map $\phi \mapsto (\phi^{ar}, \phi^{geo})$
provides a bijection between $Z^1_{loop}\bigl( \Gal(B_n/A_D) ,  G(B) \bigr)  $ and the couples
$(z, \eta)$ where $z \in Z^1\bigl( \Gal(B/A) ,  G(B) )$ and 
$\eta: \prod_i \mu_n \to {_zG}$
is an $A$--group homomorphism.

\smallskip

\noindent (2) The map $\phi \mapsto (\phi^{ar}, \phi^{geo})$
provides a bijection between $Z^1_{loop}\bigl( \pi^1(U, \xi)^t ,  G(\widetilde B)\bigr)$ and the couples
$(z, \eta)$ where $z \in Z^1\bigl( \pi^1(X, \xi) , G(\widetilde B) )$ and $\eta: \prod_{i=1}^r \widehat \ZZ' \to {_zG}$ is an $A$--group homomorphism.

\end{slemma}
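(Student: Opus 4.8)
The plan is to prove that $\phi\mapsto(\phi^{ar},\phi^{geo})$ is a bijection by producing an explicit inverse and checking that the cocycle identity for $\phi$ splits exactly along the semidirect product decomposition of $\Gal(B_n/A_D)$. That the forward map is well defined has already been recorded in the discussion preceding the statement: since $\prod_i\mu_n(B)$ fixes $B$ and hence acts trivially on $G(B)$, the restriction $\phi^{geo}$ is a group homomorphism, and the conjugation formula $\phi^{geo}(\sigma\tau\sigma^{-1})=\phi^{ar}(\sigma)\,{}^{\sigma}\phi^{geo}(\tau)\,\phi^{ar}(\sigma)^{-1}$ shows that it descends to an $A$--group homomorphism $\eta\colon\mu_n^r\to{_z G}$ with $z=\phi^{ar}$. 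So the only thing left for part (1) is to build the inverse and verify that it lands in the loop cocycles.

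For the inverse I would use the unique factorization of each element of $\Gal(B_n/A_D)$ as $\tau\sigma$ with $\tau\in\prod_i\mu_n(B)$ and $\sigma\in\Gal(B/A)$, and set $\phi(\tau\sigma):=\eta(\tau)\,z(\sigma)$; this visibly takes values in $G(B)$, so it is a loop cocycle as soon as it is a cocycle at all. The heart of the argument, and the step I expect to carry the real content, is the verification that $\phi$ is a $1$--cocycle. Writing the product in the semidirect product as $(\tau_1\sigma_1)(\tau_2\sigma_2)=\tau_1\,({}^{\sigma_1}\tau_2)\,(\sigma_1\sigma_2)$ and using that $\tau_1$ acts trivially on $G(B)$ so that ${}^{\tau_1\sigma_1}={}^{\sigma_1}$ there, the required identity $\phi(g_1g_2)=\phi(g_1)\,{}^{g_1}\phi(g_2)$ becomes, after inserting the homomorphism property of $\eta$ and the cocycle property of $z$ and cancelling $\eta(\tau_1)$ on the left and ${}^{\sigma_1}z(\sigma_2)$ on the right, the single equation
$$\eta({}^{\sigma_1}\tau_2)\,z(\sigma_1)=z(\sigma_1)\,{}^{\sigma_1}\eta(\tau_2).$$
This is precisely the statement that $\eta$ is equivariant for the twisted Galois action $g\mapsto\Int(z(\sigma))({}^{\sigma}g)$ on ${_z G}$, i.e. that $\eta\colon\mu_n^r\to{_z G}$ is defined over $A$. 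Hence $\phi$ is a loop cocycle if and only if the pair $(z,\eta)$ is admissible, and the two constructions are mutually inverse because conversely $\phi(\tau\sigma)=\phi(\tau)\,{}^{\tau}\phi(\sigma)=\phi^{geo}(\tau)\,\phi^{ar}(\sigma)$.

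For part (2) I would deduce the profinite statement from part (1) by passing to the limit over the Galois covers $B_n/A_D$. Because $G$ is locally of finite presentation we have $G(\widetilde B)=\varinjlim G(B)$, and a continuous loop cocycle on the profinite group $\pi_1^t(U,\xi)$ therefore factors through one of the finite quotients $\Gal(B_n/A_D)$ with values in a single $G(B)$; its arithmetic and geometric parts are then exactly the data $(z,\eta)$ of part (1) at that level. Since the bijections of part (1) are compatible with the transition maps $\Gal(B_{nm}/A_D)\to\Gal(B_n/A_D)$, they assemble into the asserted bijection in the limit. The one point needing care is this last compatibility on the geometric side, namely that raising the $T_i$ to powers along the tower of covers corresponds correctly to the transition maps $\mu_{nm}\to\mu_n$ under the identification $\varprojlim_n\mu_n=\widehat\ZZ'(1)$ already used in computing $\pi_1^t(U,\xi)$.
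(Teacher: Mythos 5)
Your proof is correct and matches the paper's intended argument: the paper's own proof of this lemma is a one-line citation to \cite[lemma 3.7]{GP}, whose content is exactly the computation you carry out --- unique factorization along the semidirect product $\prod_i \mu_n(B) \rtimes \Gal(B/A)$, the equivalence of the cocycle identity for $\phi(\tau\sigma)=\eta(\tau)z(\sigma)$ with the twisted Galois equivariance (i.e.\ $A$-rationality) of $\eta$, and a passage to the limit for the profinite statement. The conjugation formula you invoke is precisely the one the paper records in the discussion just before the lemma, so your write-up supplies the details that the paper delegates to the citation.
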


\begin{proof} This is similar with \cite[lemma 3.7]{GP}.
\end{proof}

We examine more closely the case of  a finite \'etale $X$--group scheme 
$\gF$ of constant degree $d$.

\begin{slemma} (1)  $\gF(\widetilde B)= \gF(X^{sc})= \gF( U^{sc,t})$.

\sm

\noindent (2) We assume that $d$ is prime to $p$.
We have $H^1_{loop}( U, \gF)= H^1(U, \gF)$.

\smallskip

\noindent (3) We assume that $d$ is prime to $p$.
Let $f: \gF \to \gH$ be a homomorphism of 
$A$--group schemes (locally of finite type).
Then $f_*\Bigl( H^1(U, \gF) \Bigr) \subset 
H^1_{loop}( U, \gH)$.
\end{slemma}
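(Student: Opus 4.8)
The plan is to establish (1) first, since (2) and (3) then follow almost formally. Write $\gF = \Spec(C)$ with $C$ a finite \'etale $A$--algebra of constant rank $d$, so that $\gF(R) = \Hom_A(C,R)$ (algebra homomorphisms) for every $A$--algebra $R$. The equality $\gF(X^{sc}) = \gF(\widetilde B)$ is then the defining property of $\widetilde B$ as the ring of the universal cover $X^{sc}=\Spec(\widetilde B)$, together with the fact that, $C$ being of finite presentation, every $A$--morphism from the pro-scheme $X^{sc}$ factors through a finite level $\Spec(B)$. The substantial equality is $\gF(\widetilde B) = \gF(U^{sc,t})$, and I would prove it at the finite levels $B_n'=B[T_1,\dots,T_r]/(T_1^n-f_1,\dots,T_r^n-f_r)$, whose localizations at $T_1\cdots T_r$ exhaust $U^{sc,t}$ in the limit, through two intermediate bijections.

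The first is $\gF(B) = \gF(B_n')$. As recalled in the excerpt, both $B$ and $B_n'$ are henselian local with the \emph{same} residue field $l$; since $C$ is finite \'etale over $A$, the equivalence of \cite[18.5.15]{EGA4} identifies each of $\gF(B)$ and $\gF(B_n')$ with $\gF(l)=\Hom_A(C,l)$, and because $B\to B_n'$ induces the identity on residue fields the transition map is the identity on $\gF(l)$. The second is $\gF(B_n') = \gF\bigl((B_n')_{loc}\bigr)$: the ring $B_n'$ is normal, being regular by Abhyankar's lemma \cite[XIII.5.1]{SGA1}, and $(B_n')_{loc}$ is a localization, so $B_n'$ is integrally closed in $(B_n')_{loc}$. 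Any $A$--homomorphism $C\to (B_n')_{loc}$ sends the elements of $C$, which are integral over $A$ hence over $B_n'$, into $B_n'$, so it factors through $B_n'$. Passing to the limit over $B$ and $n$ gives $\gF(\widetilde B)=\gF(U^{sc,t})$. The main obstacle is precisely this extension step across the divisor, where normality (purity) is the essential input; everything else is a formal limit argument.

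For (2), since $\gF$ is finite \'etale, torsors are classified by continuous cohomology,
\[
H^1(U,\gF)=H^1\bigl(\pi_1(U,\xi),\gF(U^{sc})\bigr),
\]
with coefficients in the finite group $\gF(U^{sc})$ of order $d$. By (1) the scheme $\gF$ is already constant over $U^{sc,t}$ (it has $d$ sections there), so $\gF(U^{sc})=\gF(U^{sc,t})=\gF(\widetilde B)$. As $d$ is prime to $p$ and the wild inertia subgroups are pro-$p$, every continuous cocycle $\pi_1(U,\xi)\to\gF(U^{sc})$ is trivial on wild inertia and therefore factors through the tame quotient $\pi_1^t(U,\xi)$, whence $H^1(U,\gF)=H^1\bigl(\pi_1^t(U,\xi),\gF(\widetilde B)\bigr)$. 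But by the very definition of loop cohomology the loop condition ``values in $\gF(\widetilde B)$'' is now automatic, so this last group is exactly $H^1_{loop}(U,\gF)$, giving the claimed equality.

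For (3), let $\gamma\in H^1(U,\gF)$. By (2) it is a loop class, represented by a cocycle $z:\pi_1^t(U,\xi)\to\gF(\widetilde B)$. Since $f$ is a homomorphism of $A$--group schemes it induces $f(\widetilde B):\gF(\widetilde B)\to\gH(\widetilde B)$, so $f\circ z$ is a cocycle with values in $\gH(\widetilde B)$, that is, a loop cocycle for $\gH$, and it represents $f_*(\gamma)$. Hence $f_*(\gamma)\in H^1_{loop}(U,\gH)$, as required.
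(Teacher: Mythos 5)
Your proof is correct, but it takes a genuinely different route from the paper's at both substantive steps. For (1), the paper simply chooses the cover so that $\gF_{B_n}$ is \emph{constant} and observes that sections of a constant scheme over the connected rings $B$ and $B_n$ are the same finite set $\Gamma$; you instead identify $\gF(B)\cong\gF(l)\cong\gF(B_n')$ via the henselian residue-field equivalence of \cite[18.5.15]{EGA4} and then cross the divisor by a normality/integrality argument ($B_n'$ is regular by \cite[XIII.5.1]{SGA1}, hence integrally closed in its localization $B_n$, and $C$ is integral over $A$). Both are sound; the paper's is shorter, while yours isolates purity as the real content of the extension step. The bigger divergence is in (2). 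The paper argues geometrically on the torsor $\gE$: it decomposes $\gE$ into connected finite \'etale components $V_i$ of degrees $d_i$ with $\sum d_i=d$, picks a component $V_1$ of degree prime to $p$, uses that a connected cover of degree prime to $p$ is automatically tame so that $V_1$ is dominated by $U^{sc,t}$, and concludes $\gE(U^{sc,t})\neq\emptyset$, i.e.\ the class comes from $H^1(\pi_1^t(U,\xi),\gF(U^{sc,t}))$. You argue instead at the level of cocycles: after the identification $H^1(U,\gF)=H^1(\pi_1(U,\xi),\gF(U^{sc}))$, every cocycle factors through $\pi_1^t(U,\xi)$ because the kernel $N$ of $\pi_1(U,\xi)\to\pi_1^t(U,\xi)$ is topologically generated by the pro-$p$ wild inertia subgroups while the coefficients have order $d$ prime to $p$. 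This is valid, but two points you leave implicit are load-bearing and should be stated: (a) $N$ acts trivially on $\gF(U^{sc})$ (because $\gF$ is already constant over $U^{sc,t}$, by (1) together with the fact that $\widetilde B$ is strictly henselian), which is exactly what makes the restriction of a cocycle to $N$ a genuine continuous homomorphism, hence trivial on each pro-$p$ wild inertia subgroup and so on all of $N$; and (b) the identification of $N$ as the closed normal subgroup generated by the wild inertia subgroups over the $v_{D_i}$ is the unwinding of the Grothendieck--Murre definition of tameness, and it replaces the paper's fact that prime-to-$p$ covers are tame. For (3) your argument (push a loop cocycle forward along $f(\widetilde B)$) is exactly what the paper means by ``follows readily from (2).'' Net effect: your route trades the paper's component-counting on the torsor for the group-theoretic structure of the tame quotient; both rest on the same inputs (representability of $\gF$-torsors as finite \'etale $U$-schemes and the description of $U^{sc,t}$), and neither is more general than the other.
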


\begin{proof} 
\noindent (1) We are given a cover $B_n/A_D$  as above  such that $\gF_{B_n} \cong  \Gamma_{B_n}$ is finite constant.
as above. Since $B$ and $B_n$ are connected, the map $\gF(B) \to \gF(B_n)$ reads
as the identity $\Gamma \cong \gF(B) \to \gF(B_n) \cong \Gamma$ so is bijective.
By passing to the limit we get  $\gF(\widetilde B)= \gF( U^{sc,t})$.

\sm

\noindent (2) Let $\gE$ be a $\gF$--torsor over $U$. 
This is a finite \'etale $U$--scheme.
Since $U$ is noetherian and connected, 
we have a decomposition $\gE= V_1 \times_U \cdots \times_U V_l$ where each 
$V_i$ is a connected finite \'etale $U$--scheme of constant degree $d_i$.
We have $d_1+ \dots +d_l=d$ so that we can assume that $d_1$ is prime to $p$.
We have then $\gE(V_1) \not = \emptyset$. 

It follows that $f_1: V_1 \to U$ is a finite \'etale cover so that 
there exists a  factorization $ U^{sc, t} \to V_1 \xrightarrow{h} U$ of $f$
so that $\gE(U^{sc, t} ) \not = \emptyset$. Therefore $[\gE]$ arises
from $H^1(\pi_1^t(U, \xi), \gF(U^{sc, t})) \subset H^1(U, \gF)$.
It follows that $H^1(\pi_1^t(U, \xi), \gF(U^{sc, t})) \simlgr H^1(U, \gF)$.
We use now (1) and obtain the desired bijection  $H^1(\pi_1^t(U, \xi), \gF(B)) \simlgr H^1(U, \gF)$.

\sm

\noindent (3) This follows readily from (2).
\end{proof}

\subsection{Twisting by loop torsors}
We assume that the $A$--group scheme $G$ acts 
on an $A$--scheme $Z$.
Let $\phi: (\prod_i^r \mu_n)(B) \rtimes \Gal(B/A) \to G(B)$
be a loop cocycle.
It gives rise to  an $A$--action  of
 $\mu_n^r$ on $_{\phi^{ar}}\!Z$. 
We denote by $(_{\phi^{ar}}\!Z)^{\phi^{geo}}$
the fixed point locus for this action, it is representable
by a closed $A$--subscheme of $_{\phi^{ar}}\!Z$ \cite[A.8.10.(1)]{CGP}. We have a closed embedding  $(_{\phi^{ar}}\!Z)^{\phi^{geo}} \times_X U  \subset {_\phi \! Z}$ of
$U$-schemes.

\section{Fixed points method }

\begin{stheorem}\label{thm_fixed}
Let $X=\Spec(A)$ be a henselian regular local scheme  and $U=X \setminus D$  as above.
We denote by $v: K^\times \to \ZZ$
the discrete valuation associated  to the
exceptional divisor $E$ of the blow-up of $X$ at its closed point.

Let $G$ be an affine  $A$-group scheme  of finite presentation
acting on a proper smooth $A$--scheme $Z$.
Let $\phi$ be a loop cocycle for $G$. Then 
$Y=  \bigl( _{\phi^{ar}}Z\bigr)^{\phi^{geo}}$ is  a
smooth proper $A$--scheme and  
the following are equivalent:

\sm

(i) $(_\phi\!Z)(K_v) \not = \emptyset$; 

\sm

(ii) $Y(k) \not \not = \emptyset$;

\sm 

(iii)  $Y(U) \not \not = \emptyset$;

\sm 

(iv) $(_\phi\!Z)(U) \not = \emptyset$.

\end{stheorem}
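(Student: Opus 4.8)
The plan is to prove the cycle of implications $(i)\Rightarrow(ii)\Rightarrow(iii)\Rightarrow(iv)\Rightarrow(i)$, after first disposing of the smoothness and properness of $Y$. For the latter, recall that $n$ is prime to $p$, hence invertible in the local ring $A$, so $\mu_n^r$ is a linearly reductive $A$-group scheme of multiplicative type acting, through $\phi^{geo}$, on ${}_{\phi^{ar}}Z$. Since $Z$ is smooth and proper over $A$ and $B/A$ is finite \'etale, ${}_{\phi^{ar}}Z$ is again smooth and proper over $A$ by descent; its fixed-point locus $Y$ under a linearly reductive group is then smooth over $A$ by \cite[A.8.10]{CGP}, and proper since it is closed in ${}_{\phi^{ar}}Z$.

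The three remaining implications are essentially formal. For $(ii)\Rightarrow(iii)$: a $k$-point of the smooth $A$-scheme $Y$ lifts, by Hensel's lemma over the henselian local ring $A$, to a section $X\to Y$, whose restriction along $U\hookrightarrow X$ gives an element of $Y(U)$. For $(iii)\Rightarrow(iv)$: the closed embedding $({}_{\phi^{ar}}Z)^{\phi^{geo}}\times_X U \subset {}_\phi Z$ recalled above identifies $Y(U)$ with a subset of $({}_\phi Z)(U)$. For $(iv)\Rightarrow(i)$: composing a $U$-point with the chain $\Spec(K_v)\to\Spec(K)\to U$, where $\Spec(K)$ is the generic point of $U$, produces a $K_v$-point of ${}_\phi Z$.

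The substance is $(i)\Rightarrow(ii)$, which I would obtain by the fixed-point method together with the ramification analysis of \S\ref{subsec_blow_up}. Write $\cO_v$ for the completion of $R$, a complete discrete valuation ring with fraction field $K_v$ and residue field $F=k(t_1,\dots,t_{r-1})$. First I would reduce $(ii)$ to the assertion $Y(K_v)\neq\emptyset$. Indeed $Y$ is smooth and proper over $A$, hence over $\cO_v$, so the valuative criterion gives $Y(K_v)=Y(\cO_v)$ and smoothness gives a surjection $Y(\cO_v)\twoheadrightarrow Y(F)=Y_k(F)$; conversely a $k$-point lifts to a section over $A$ and then over $\cO_v$, so that $Y(k)\neq\emptyset\Leftrightarrow Y(K_v)\neq\emptyset$ once one knows $Y_k(F)\neq\emptyset\Rightarrow Y_k(k)\neq\emptyset$. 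For the latter, $F$ is the function field of $\mathbb{P}^{r-1}_k$ and $Y_k$ is proper over $k$, so an $F$-point spreads to a rational map $\mathbb{P}^{r-1}_k\dashrightarrow Y_k$ whose indeterminacy locus has codimension $\geq 2$; restricting to a suitably chosen $k$-rational line not contained in it yields a morphism $\mathbb{P}^1_k\to Y_k$, whence a $k$-point of $Y$. It then remains to prove $({}_\phi Z)(K_v)\neq\emptyset\Rightarrow Y(K_v)\neq\emptyset$, the reverse implication being immediate from the base change to $K_v$ of the same closed embedding.

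For this last implication I would work over the maximal unramified subextension $M=(L_{w_n})^{\Delta}$ of $L_{w_n}/K_v$ computed in \S\ref{subsec_blow_up}, whose inertia is the diagonal $\Delta(\mu_n)$. Over $M$ the twist becomes $({}_\phi Z)_M={}_\psi(Z_M)$, where $\psi=\phi^{geo}\circ\Delta$ is the inertia homomorphism. A $K_v$-point gives an $M$-point; using that $Z$ is proper I would extend it to a point over the ring of integers of $L_{w_n}$, note that it is $\mu_n$-equivariant for the twisted action, and reduce modulo the uniformizer: because $L_{w_n}/M$ is totally and tamely ramified the residue field is unchanged and $\mu_n$ acts trivially on it, so the reduction lands in the fixed locus of the diagonal $\mu_n$, i.e.\ in $\bigl(({}_{\phi^{ar}}Z)^{\Delta(\mu_n)}\bigr)_{F_{l,n}}$. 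The main obstacle is to upgrade this diagonal-fixed point to a point of the full fixed locus $Y$, that is, to descend the residual geometric directions $\mu_n^r/\Delta(\mu_n)\cong\mu_n^{r-1}$ — which act on $F_{l,n}/F_l$ exactly as its Galois group — together with the arithmetic part $\Gal(l/k)$. I expect this descent to be the delicate step, to be carried out either by induction on $r$, the residual datum being a loop cocycle in the $r-1$ variables $t_1,\dots,t_{r-1}$, or by iterating the one-variable fixed-point reduction along the valuations of $F$ attached to $t_1,\dots,t_{r-1}$, exactly as in the Laurent-polynomial situation of \cite[\S 7]{GP}.
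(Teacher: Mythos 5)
Your handling of the smoothness and properness of $Y$ (via \cite[A.8.10]{CGP}) and of the three formal implications $(ii)\Rightarrow(iii)\Rightarrow(iv)\Rightarrow(i)$ coincides with the paper's. The problem is that the substantive implication $(i)\Rightarrow(ii)$ is not actually proved. You perform only the first specialization: extend the point to the valuation ring of $L_{w_n}$ by properness, reduce modulo the maximal ideal, and record that the reduction is fixed by the twisted inertia action of $\Delta(\mu_n)$. Everything beyond that --- descending through the residual geometric directions $\mu_n^r/\Delta(\mu_n)$ and the arithmetic part $\Gal(l/k)$ --- is explicitly left as ``the delicate step'' that you ``expect'' can be carried out by induction or iteration. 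That step is the entire content of the paper's proof of $(i)\Rightarrow(ii)$, so what you have is a program, not a proof. Moreover, your set-up makes the remaining step harder than it needs to be: by passing to $M=(L_{w_n})^{\Delta}$ you retain only inertia-fixedness of the reduced point, and a mere $F_{l,n}$-point of the $\Delta(\mu_n)$-fixed locus carries no usable descent datum. The paper instead keeps the \emph{full} condition: the specialization map $Z(L_{w_n})=Z(\cO_{w_n})\to Z_k(F_{l,n})$ is equivariant for the whole semi-linear twisted action of $\Gal(L_{w_n}/K_v)$, so the reduced point satisfies $\phi(\sigma).\sigma(z)=z$ for every $\sigma\in\Gal(L_{w_n}/K_v)$; one then embeds $F_{l,n}=l(t_1^{1/n},\dots,t_{r-1}^{1/n})$ into $l((t_1^{1/n}))\cdots((t_{r-1}^{1/n}))$ and repeats the same properness-plus-equivariance specialization $r-1$ more times, so that, after the harmless normalization $\phi^{ar}=1$ (twist $G$ and $Z$ by $\phi^{ar}$ --- a reduction you never make), the terminal fixed set is exactly $Y(k)=(Z_k)^{\phi^{geo}}(k)$.

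Separately, your auxiliary reduction of $(ii)$ to $Y(K_v)\neq\emptyset$ is both unnecessary (the specialization chain above lands directly in $Y(k)$) and partly unsound: the claim that $Y_k(F)\neq\emptyset$ forces $Y_k(k)\neq\emptyset$ by restricting the rational map $\mathbb{P}^{r-1}_k\dashrightarrow Y_k$ to a suitable $k$-rational line is an infinite-field argument. Here $k$ may be finite; for $r\geq 4$ the indeterminacy locus can have positive dimension, there are only finitely many $k$-rational lines (the $k$-points of a Grassmannian), and nothing prevents all of them from lying inside it. The field-independent proof of this implication is, once again, iterated specialization: write $F=k(t_1,\dots,t_{r-2})(t_{r-1})$, extend the point over the $t_{r-1}$-adic valuation ring by properness, specialize, and induct --- i.e.\ exactly the mechanism whose iteration you deferred.
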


This is quite similar with the fixed point theorem
\cite[\S , thm. 7.1]{GP}. The following example makes the connection.

\begin{sexample}{\rm We assume that  $A=k[[t_1,\dots, t_r]]$ for a field $k$
and \break $k[U]=k[[t_1,\dots, t_n]]\bigl[ \frac{1}{t_1}, \dots, \frac{1}{t_r} \bigr]$.  
We consider  an affine algebraic  $k$--group $G$ acting on a smooth proper
$k$--scheme $Z$. 
In this case $K=k((t_1,\dots, t_r))$ and  $A$ embeds in $k\bigl(\frac{t_1}{t_r}, \dots,  \frac{t_{r-1}}{t_r}\bigr)[[t_r]]$
so that $K$ embeds in $k\bigl(\frac{t_1}{t_n}, \dots,  \frac{t_{r-1}}{t_r}\bigr)((t_r))$
which is nothing but the complete field $K_v$. 
If $Q$ is a loop $G$-torsor over $U$, 
the statement is then that  ${^QZ}(U) \not = \emptyset$ if and only if ${^QZ}(K_v) \not = \emptyset$.
Taking a cocycle $\phi \in Z^1( \pi_1(U)^t, G(k_s))$ for $E$, 
this rephrases by the equivalence between  $(_{\phi}Z)(U) \not = \emptyset$ and $(_{\phi}Z)(K_v) \not =\emptyset$. 

What we have from  \cite[thm. 7.1]{GP} (in characteristic zero but this extends to this tame setting) is 
the equivalence between $(_{\phi}Z)(k[t_1^{\pm 1}, \dots, t_r^{\pm 1}]) \not = \emptyset$ and  \break $(_{\phi}Z)\bigl(k((t_1)) \dots ((t_r)) \bigr)\not = \emptyset$. 
Since  $(_{\phi}Z)(k[t_1^{\pm 1}, \dots, t_r^{\pm 1}]) \subset  (_{\phi}Z)(U)$
and  $(_{\phi}Z)\bigl( K_v \bigr) \subset  (_{\phi}Z)\bigl( k((t_1)) \dots ((t_r)) \bigr) $, it follows
that this special case of Theorem \ref{thm_fixed}  is a consequence of the fixed point result of \cite{GP}.
 }
\end{sexample}

We proceed to the proof of Theorem \ref{thm_fixed}.

\begin{proof} 
According to \cite[A.8.10.(1)]{CGP}, $Y=  \bigl( _{\phi^{ar}}(Z^{\phi^{geo}}) \bigr)$ is  a closed  $A$--scheme of 
$_{\phi^{ar}}Z$ so it is proper.
It is smooth over $X$  according to point (2) of the same reference.  
Let  $\phi: \Gal(B_n/A_D) \to G(B)$ be the  loop $1$-cocycle
 for some Galois cover $B_n/A_D$ as above for  some $n$ prime to $p$.
 Up to replacing $G$ by $_{{\phi^{ar}}\!G}$ and
$G$ by $_{\phi^{ar}Z}$, we can assume
that $\phi^{ar}=1$ without lost of generality.

\sm

\noindent $(ii) \Longrightarrow (iii)$. Since $Y_k$ is the special fiber of the smooth $X$--scheme
$Y$, Hensel's lemma shows that $Y(A) \to Y(k)$ is onto.
Since $Y(k)$ is not empty, it follows that $Y(A)$ is not empty and so 
is $Y(U)$.

\sm

\noindent $(iii) \Longrightarrow (iv)$.
Since $Y(U) \subset {_\phi\!Z}(U)$, $Y(U) \not = \emptyset$ implies that 
${_\phi\!Z}(U)\not = \emptyset$.

\sm

\noindent $(iv) \Longrightarrow (i)$. This is obvious.

\sm

\noindent $(i) \Longrightarrow (ii)$.
We assume that 
$(_\phi\!Z)(K_v) \not = \emptyset$.
By definition we have
$$
(_\phi Z)(K_v) = \bigl\{ z \in Z(L_{w_n})
\, \mid \, \phi(\sigma). \sigma(z)=z  \enskip
\forall \sigma  \in \Gal(L_n/K) \bigr\}
$$
and our assumption is that this set is non-empty.
Let $O_{w_n}$ be the valuation ring of 
$Z(L_{w_n})$. Since $Z$ is proper over $X$, 
we have a specialization map
$Z(L_{w_n}) = Z(\cO_{w_n}) \to Z_k(F_{l,n})$. 
We get that the set 
$$
\bigl\{ z \in Z_k(F_{l,n}) \mid \, \phi(\sigma). \sigma(z)=z  \enskip
\forall \sigma  \in \Gal(L_{w_n}/K_v) \bigr\}
$$
is not empty. Since we have an embedding
$$
F_{l,n}=l\bigl( t_1^{1/n} ,\dots, t_{r-1}) \enskip  \hookrightarrow
\enskip l\bigl( \bigr(  t_1^{1/n} \bigr)\bigr) \dots
\bigl( \bigl(  t_{r-1}^{1/n} \bigr)\bigr) 
$$
in a higher field of Laurent series,
successive specializations along the coordinates 
$t_1^{1/n}$, ...,   $t_{r-1}^{1/n}$ 
show similarly  that  the set 

\begin{equation} \label{eq_omega}
\Bigl\{ z \in (Z_k)\bigl( l \bigr)
\,  \mid \, \phi(\sigma). \sigma(z)=z  \enskip
\forall \sigma  \in   \Gal(L_{w_n}/K_v) \Bigr\}
\end{equation}
is not empty. Since $\eta^{ar}=1$, this set 
is $(Z_k)^{\eta^{geo}}(k)$. Thus $Y(k)=(Z_k)^{\eta^{geo}}(k)$ is non empty.
\end{proof}

\section{Parabolic subgroups of loop reductive group schemes}

\subsection{Chevalley groups}
Let $G_0$ be Chevalley group defined over $\ZZ$.
 Let $T_0$ be a maximal split $\ZZ$-subtorus
 of $G_0$ together with a Borel subgroup $B_0$ containing it. 
  We denote by $\Delta_0$ the Dynkin diagram
 of $(G_0,B_0,T_0)$.
We denote by $G_{0,ad}$ the adjoint quotient of 
$G_0$ and by $G_0^{sc}$ the simply connected covering of $DG_0$. 
We have a map $\Aut(G_0) \to \Aut(G_0^{sc}) \simlgr \Aut(G_{0,ad})$ 
and a fundamental exact sequence
$$
1 \to G_{0,ad} \to \Aut(G_{0,ad}) \to \Out(G_{0,ad}) \to 1
$$
 where $\Out(G_{0,ad}) \simlgr \Aut(\Delta_0)$.
 We recall that there is a bijection 
 $I \to P_{0,I}$ between the finite subsets of 
 $\Delta_0$ and the parabolic subgroups of $G_0$ containing $B_0$ \cite[XXVI.3.8]{SGA3}; it is increasing for the inclusion order, in particular $B_0= P_{0,\emptyset}$ and $G_0= P_{0,\Delta_0}$.
We consider the  total scheme $\mathrm{Par}_{G_0}$ of parabolic subgroups
of $G_0$, it is a projective smooth $\ZZ$--scheme
equipped with a type map $\mathbf{t}:  \mathrm{Par}_{G_0} \to \Ouf(\Delta_0)$
where $\Ouf(\Delta_0)$ stands for the finite constant scheme attached
to the set of subsets of $\Delta_0$
\cite[XXVI.3]{SGA3}.
 The fiber at $I$ is  denoted by $\mathrm{Par}_{G_0,I}$, 
 it has connected fibers and is the scheme of parabolic subgroups
 of $G_0$ of type $I$. 
 We have a natural  action of $\Aut(G_0)$ on $\mathrm{Par}_{G_0}$.
As in \cite[\S 5.1]{Gi}, we denote by $\Aut_I(G_0)$ the
stabilizer of $I$ for this action. By construction
 $\Aut_I(G_0)$ acts on $\mathrm{Par}_{G_0,I}$.

 \subsection{Definition}
 Let $G$ be a reductive $U$-group scheme in the sense of Demazure-Grothendieck  \cite[XIX]{SGA3}. Since $U$ is connected and $G$ is locally splittable \cite[XXII.2.2]{SGA3}
for the \'etale topology,  $G$ is an \'etale form 
of a Chevalley group $G_0$  as above defined over $\ZZ$.

We say that $G$ is a {\it loop group scheme} if the
 $\Aut(G_0)$-torsor $Q=\Isom(G_0,G)$ 
(defined in \cite[XXIV.1.9]{SGA3}) is a loop $\Aut(G_0)$-torsor.
We denote by $G_{0,ad}$ the adjoint quotient of 
$G_0$ and by  $G_0^{sc}$ the simply connected covering of $DG_0$. 
We have a map $\Aut(G_0) \to \Aut(G_0^{sc}) \simlgr \Aut(G_{0,ad})$
which permits to see $G_{ad}$ (resp.\ $G^{sc}$) as twisted forms
of  $G_{0,ad}$ (resp.\ $G^{sc}_0$) so that $G_{ad}$ and $G^{sc}$ are 
also loop reductive group schemes.
We consider the map $\Aut(G_0) \to \Aut(G_{0,ad}) \to
\Out(G_{0,ad}) \simlgr \Aut(\Delta_0)$.

If $\phi: \Gal(B_n/A_D) \to \Aut(G_0)(B)$ is a loop cocycle, 
we get an action of $\Gal(B_n/A_D)$  on $\Delta_0$ called the star action.
If $I$ is stable under the star action, we can twist 
$\mathrm{Par}_{G_0,I}$ by $\phi$ and deal with the 
scheme ${_\phi\!\mathrm{Par}_{G_0,I}}$ which is the scheme of
parabolic subgroup schemes of $G$ of type $I$.

\subsection{Parabolics} 

\begin{stheorem}\label{thm_parabolic} Assume that $G$ is a loop reductive $U$-group scheme and let $\phi: \Gal(B_n/A_D) \to \Aut(G_0)(B)$ 
be a loop cocycle such that $G \cong {_\phi \! G_0}$. 
Let $I \subset \Delta_0$ be a subset stable under the star action 
defined by $\phi$. Then the following are equivalent:

\sm

(i) $G$ admits a $U$--parabolic subgroup of type $I$;

\sm

(ii) the $k$--morphism 
$\eta^{geo}_k: \mu_n^r \to 
\Aut(_{\eta^{ar}}\! G_0)_k
= \bigl({_{\eta^{ar}} \!\Aut(G_0)}\bigr)_k$ 
normalizes a parabolic $k$--subgroup of $_{\eta^{ar}}G_{0,k}$ of type $I$;

\sm 

(iii)  $G_{K_v}$ admits a parabolic subgroup of type 
$I$.
\end{stheorem}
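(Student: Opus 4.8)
The plan is to reduce the statement to the fixed point theorem \ref{thm_fixed}. The first point to notice is that the hypothesis that $I$ be stable under the star action is exactly what puts us in the situation of that theorem. Indeed, the star action is the composite $\Gal(B_n/A_D)\xrightarrow{\phi}\Aut(G_0)(B)\to\Aut(\Delta_0)$, and the stabilizer $\Aut_I(G_0)$ is the preimage of the stabilizer of $I$ in $\Aut(\Delta_0)$ under $\Aut(G_0)\to\Out(G_{0,ad})\simlgr\Aut(\Delta_0)$. Hence the assumption on $I$ means precisely that $\phi$ takes its values in $\Aut_I(G_0)(B)$, so that $\phi$ is a loop cocycle for $\Aut_I(G_0)$, an affine $A$-group scheme of finite presentation. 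The latter acts on $Z:=\mathrm{Par}_{G_0,I}\times_{\ZZ}A$, which is a smooth proper (indeed projective) $A$-scheme, since $\mathrm{Par}_{G_0,I}$ is an open and closed subscheme of the projective smooth $\ZZ$-scheme $\mathrm{Par}_{G_0}$.

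I would then apply Theorem \ref{thm_fixed} verbatim to the action of $\Aut_I(G_0)$ on $Z$ and to the loop cocycle $\phi$. It produces the smooth proper $A$-scheme $Y=\bigl({_{\phi^{ar}}\mathrm{Par}_{G_0,I}}\bigr)^{\phi^{geo}}$ together with the equivalences
\[
({_\phi Z})(K_v)\neq\emptyset\iff Y(k)\neq\emptyset\iff Y(U)\neq\emptyset\iff ({_\phi Z})(U)\neq\emptyset,
\]
where $\phi^{ar},\phi^{geo}$ are the arithmetic and geometric parts of $\phi$ furnished by Lemma \ref{lem_dico} (these are the $\eta^{ar},\eta^{geo}$ of the statement). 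It then remains to translate three of these four conditions into (i), (ii), (iii), using the description recalled in the Definition subsection that ${_\phi\mathrm{Par}_{G_0,I}}$ is the $U$-scheme of parabolic subgroup schemes of $G={_\phi G_0}$ of type $I$.

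The dictionary goes as follows. The condition $({_\phi Z})(U)\neq\emptyset$ says exactly that $G$ possesses a $U$-parabolic subgroup of type $I$, which is (i). The $k$-points of $Y$ are the parabolic $k$-subgroups of ${_{\phi^{ar}}G_{0,k}}$ of type $I$ that are fixed by the $\mu_n^r$-action induced by $\phi^{geo}$, that is, those normalized by $\phi^{geo}_k$; hence $Y(k)\neq\emptyset$ is condition (ii). The step deserving genuine care is the identification of $({_\phi Z})(K_v)\neq\emptyset$ with (iii): there I must realize $G_{K_v}$ as the twist of $G_{0,K_v}$ by $\phi$ and identify its type-$I$ parabolic $K_v$-subgroups with $({_\phi\mathrm{Par}_{G_0,I}})(K_v)$ in the sense of Theorem \ref{thm_fixed}. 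This is precisely where the computation of \S\ref{subsec_blow_up} is used: it identifies $L_{w_n}/K_v$ as the Galois extension with group $\prod_i\mu_n(l)\rtimes\Gal(l/k)=\Gal(B_n/A_D)$, so that $\phi$ makes sense as a cocycle over $K_v$ and $G_{K_v}$ is the associated twist, its type-$I$ parabolics being exactly the $K_v$-points appearing in loc.\ cit. Granting this identification, conditions (i), (ii), (iii) are three of the four equivalent assertions of Theorem \ref{thm_fixed}, which proves the claim.
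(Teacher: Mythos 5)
Your proposal follows the paper's strategy exactly at the structural level --- apply Theorem \ref{thm_fixed} to the action of $\Aut_I(G_0)$ on $\mathrm{Par}_{G_0,I}$, then translate the three resulting conditions --- and your dictionary for (i), (ii), (iii) is correct, including the point (left implicit in the paper) that the blow-up computation identifies $\Gal(L_{w_n}/K_v)$ with $\Gal(B_n/A_D)$, so that $G_{K_v}$ is the twist of $G_{0,K_v}$ by $\phi$ and its type-$I$ parabolics are the $K_v$-points of ${_\phi \mathrm{Par}_{G_0,I}}$. But there is a genuine gap at your very first step: you claim that $\Aut_I(G_0)$ is an affine $A$-group scheme of finite presentation, and this is false for a general reductive (non-semisimple) $G_0$, which the theorem allows, since ``loop reductive'' is defined here for arbitrary reductive $U$-group schemes. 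If the radical of $G_0$ is a torus of rank $\geq 2$ (e.g.\ $G_0 = \SL_2 \times \GG_m^2$), then $\Out(G_0)$ is an \emph{infinite} constant group scheme --- it contains $\GL_2(\ZZ)$ acting on the cocharacter lattice of the radical --- and this infinite discrete part lies in the kernel of the map to $\Aut(\Delta_0)$, hence survives inside $\Aut_I(G_0)$. So $\Aut_I(G_0)$ has infinitely many connected components; it is neither affine nor of finite presentation over $A$, and Theorem \ref{thm_fixed} cannot be applied ``verbatim'' as you assert: its hypotheses simply fail.

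This is precisely why the paper's proof opens with ``without loss of generality we can assume that $G$ is adjoint.'' For adjoint $G_0$ the fundamental exact sequence $1 \to G_{0,ad} \to \Aut(G_{0,ad}) \to \Out(G_{0,ad}) \to 1$ with $\Out(G_{0,ad}) \simeq \Aut(\Delta_0)$ finite shows that $\Aut(G_0)$ is affine of finite presentation, and $\Aut_I(G_0)$, being the preimage of the stabilizer of $I$ in the finite constant group $\Aut(\Delta_0)$, inherits both properties, so Theorem \ref{thm_fixed} genuinely applies. The reduction itself is harmless: parabolic subgroups of $G$ and of $G_{ad}$ of a given type correspond bijectively over any base (SGA3, XXVI), this correspondence holds over $U$, over $K_v$ and over $k$ alike, and $G_{ad}$ is again a loop group scheme whose cocycle is the image of $\phi$ under $\Aut(G_0) \to \Aut(G_{0,ad})$, compatibly with arithmetic and geometric parts. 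Insert this reduction at the start and the rest of your argument is correct and coincides with the paper's proof.
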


\begin{proof}
Without loss of generality we can assume that 
$G$ is adjoint. 
Our assumption on the star action is rephrased by saying that 
$\phi$ takes values in $\Aut_I(G_0)$.
We apply Theorem \ref{thm_fixed} to 
the action of $\Aut_I(G_0)$ on the 
proper $A$-scheme $\mathrm{Par}_{G_0,I}$.
We consider the $A$-scheme $Y= ({_{\phi^{ar}}}\!\mathrm{Par}_{G_0,I})^{\phi^{geo}}$.
Theorem \ref{thm_fixed} shows that the following statements
are equivalent.

\smallskip

(i') $(_\phi\!\mathrm{Par}_{G_0,I})(U) \not = \emptyset$;

\smallskip

(ii') $Y(k) \not = \emptyset$.

\smallskip

(iii') $(_\phi\!\mathrm{Par}_{G_0,I})(K_v) \not = \emptyset$.

\smallskip

Clearly (i') is equivalent to condition (i) of the Theorem
and similarly we have $(iii') \Longleftrightarrow (iii)$. 
It remains to establish the equivalence between (ii) and (ii').

Assume that  $({_{\phi^{ar}}}\!\mathrm{Par}_{G_0,I})^{\phi^{geo}}(k)$
is not empty and pick a $k$--point $z$. 
Then the stabilizer $({_{\phi^{ar}}}\!G_0)_z$ is a $k$--parabolic subgroup of ${_{\phi^{ar}}}\!G_0$ of type $I$
which is stabilized by the action $\phi^{geo}_k$.
In other words,  $\phi^{geo}_k$ normalizes $({_{\phi^{ar}}}\!G)_z$.
Conversely we assume that ${_{\phi^{ar}}}\!G$  
admits a $k$--parabolic subgroup of type $I$ 
normalized by $\phi^{geo}$. It defines then a point $z \in 
({_{\phi^{ar}}}\!\mathrm{Par}_{G_0,I})(k)$ which is 
fixed by $\phi^{geo}$.

\end{proof}

\subsection{An example}

Assume that the residue field $k$ is not of characteristic $2$ and 
consider the diagonal quadratic form of dimension $2^r$
$$
q = \sum_{I \subset \{1, \dots ,r\}}  u_I \,  t^I (x_I)^2 
$$
where $t_I= \prod_{i \in I} t_i$ and $u_I \in A^\times$.
Then $\SO(q)$ is a loop reductive group scheme over $U$.
Since the projective quadric $\{q =0 \}$ is a scheme of parabolic subgroups
of $\SO(q)$, Theorem \ref{thm_parabolic} shows that 
$q$ is isotropic over $A_D$ if and only if 
$q$ is isotropic over $K_v$.
The $2$-dimensional case is related with \cite[proof of Theorem 3.1]{CTPS}.

 \bigskip

\bigskip

\medskip

\end{document}